\documentclass[10pt]{amsart}
\usepackage[most]{tcolorbox}
\usepackage{amssymb, amsmath}
\usepackage{graphics,xcolor}
\usepackage{latexsym,amsmath,amssymb,amsthm,amsfonts,mathrsfs}
\usepackage{amscd}
\usepackage[arrow, matrix, curve]{xy}
\usepackage{tikz}
\usetikzlibrary{positioning, 
                quotes}

\DeclareMathOperator{\aut}{Aut}

\DeclareMathOperator{\gl}{GL}

\DeclareMathOperator{\jac}{Jac} 
\DeclareMathOperator{\pic}{Pic}
\DeclareMathOperator{\nm}{Nm}

\DeclareMathOperator{\im}{Im}

\DeclareMathOperator{\py}{Prym}

\DeclareMathOperator{\dv}{Div}
\DeclareMathOperator{\vol}{Vol}
\DeclareMathOperator{\gram}{Gram}

\theoremstyle{plain}
\newtheorem{thm}{Theorem}[section]

\newtheorem{lemma}[thm]{Lemma}

\newtheorem{proposition}[thm]{Proposition}

\theoremstyle{definition}

\newtheorem{remark}[thm]{Remark}

\newtheorem{definition}[thm]{Definition}

\newtheorem{example}[thm]{Example}

\numberwithin{equation}{thm}

\newcommand{\sE}{{\mathcal E}}

\newcommand{\N}{{\mathbb N}}

\newcommand{\R}{{\mathbb R}}

\newcommand{\Z}{{\mathbb Z}}

\title[]{tropical covers, tropical abelian varieties and Prym varieties}
\author{Abolfazl Mohajer}
\address{}
\email{abmohajer83@gmail.com}
\subjclass[2010]{14H40, 14T05}
\keywords{Shimura variety, Prym variety}

\begin{document}
\begin{abstract}
We define and investigate the tropical Prym varieties  associated to unramified Galois cyclic covers of tropical curves (or equivalently metric graphs) $\tilde{\Gamma}\to \Gamma$. Our approach here is to study the tropical Prym varieties using group actions on tropical abelian varieties induced by the cyclic Galois group of the cover of tropical curves. We also define and consider the Abel-Prym map for tropical cyclic covers extending that for double covers. As a special case we consider free $\Z_3$-covers of tropical curves and their associated tropical Prym variety and compute its volume generalizing the case of double covers. 
\end{abstract}
	
\maketitle

\section{introduction}
It is well-known that if $X$ is a Riemann surface and $B\subset X$ is a finite set, then there is a unique cover $Y\to X$ ramified exactly over $B$ with prescribed monodromy. GIven a finite group $G$, the classical theory of Galois covers of Riemann surfaces sayy that Galois cover of $X$, covers with deck transformation group given by $G$, are classified by giving a monodromy represenation $\pi_1(X,x_0)\to G$ for some $x_0\in X\setminus B$.  There is an analogy to this in tropical geometry discussed and reinforced by many authors, see for example \cite{LUZ24}.\par The tropical counterpart of a non-constant holomorphic map of Riemann surfaces is a finite harmonic map of metric graphs (equivalently tropical curves).  This need not be a topological cover, indeed harmonic maps can dilate the edges of the graph, namely restricted to an edge $e$ identified with an interval $[0,a]$, it is given by a map $x\mapsto dx$. Associated to a cover of tropical curves $Y\to X$, there is a Prym variety analogous to the Prym variety obtained by covers of Riemann surfaces. The tropical Prym variety is an abelian tropical variety. The classical Prym variety is an isogeny factor of the Jacobian variety of $Y$ and has many properties that can be controlled through the covering map. Our aim in this paper is to establish similar properties for the Prym variety associated to tropical curves or equivalently metric graphs, the so-called tropical Prym variety of the tropical cover. \par 
In the next section, we introduce harmonic morphisms of metric graphs and explore properties of harmonic morphisms and consider Galois covers of tropical curves (metric graphs). \par In Section 3, we consider tropical abelian varieties and group actions on them. This section paves the way for Section 4 in which we introduce the tropical Prym variety by using the group action of the Galois group of tropical Galois cover $\varphi:\widetilde{\Gamma}\to\Gamma$ on the tropical Jacobian $\jac(\widetilde{\Gamma})$. We then explore various properties of Prym varieties using this defition. We also define the so-called Abel-Prym map $\widetilde{\Gamma}\to P$  for $\Z_n$-Galois cover of tropical covers. This generalizes the Abel-Prym map fro double covers considered in \cite{LZ22}. \par In particular we consider triple covers $\varphi:\widetilde{\Gamma}\to\Gamma$, that is, $\Z_3$-Galois covers of tropical curves or metric graphs. In this case we explicitely compute the volume of Prym variety and extend the formula in Proposition 3.6 in \cite{LZ22} for the volume of tropical Prym variety associated to double covers of metric graphs.

\section{Harmonic morphisms of metric graphs, Galois covers and their properties}
Let $\Gamma$ be a finite graph. We denote the set of vertices and edges of the graph respectively by $V(\Gamma)$ and $E(\Gamma)$. The genus of the graph is defined by $g(\Gamma)=|E(\Gamma)|-|V(\Gamma)|+1$. A \emph{metric graph} is the compact metric space obtained from a finite graph $V(\Gamma)$ by assigning positive lengths $\ell:E(\Gamma)\to\R_{>0}$ to its edges and identifying each edge $e\in E(\Gamma)$ with a closed interval of length $\ell(e)$. 
\begin{definition}\label{morph}
A continuous map $\varphi:\widetilde{\Gamma}\to\Gamma$ of metric graphs is called a morphism if there exist vertex sets $\widetilde{V}\subseteq\widetilde{\Gamma}$ and $V\subseteq\Gamma$ such that $\varphi(\widetilde{V})\subseteq V, \varphi^{-1}(E(\Gamma))\subseteq E(\widetilde{\Gamma})$ and the restriction of $\varphi$ to any edge $\widetilde{e}$ of $\widetilde{\Gamma}$ is a dilation by some factor $d_{\widetilde{e}}(\varphi)\in\N_0$. A morphism $\varphi$ is called finite if $d_{\widetilde{e}}(\varphi)>0$ for every edge $\widetilde{e}$ of $\widetilde{\Gamma}$. 
\end{definition}

Let $\varphi:\widetilde{\Gamma}\to\Gamma$ be finite morphism of metric graphs. Let $\widetilde{x}\in\widetilde{\Gamma}$ and set $x=\varphi(\widetilde{x})$.
Further let $v\in T_{x}(\Gamma)$ be a tangent vector at $x$. We define
\begin{equation} \label{loc.deg}
d_{\widetilde{x},v}(\varphi)=\displaystyle\sum_{\widetilde{v}\in\varphi^{-1}(v)\cap T_{\widetilde{x}}(\widetilde{\Gamma})}d_{\widetilde{v}}(\varphi)
\end{equation}

\begin{definition}\label{harmonic}
A finite morphism $\varphi:\widetilde{\Gamma}\to\Gamma$ of metric graphs is called harmonic at $\widetilde{x}\in\widetilde{\Gamma}$ if the quantity $d_{\widetilde{x},v}(\varphi)$ defined in \ref{loc.deg} is independent of the tangent vector $v\in T_{x}(\Gamma)$, in which case it will be denoted simply by
$d_{\widetilde{x}}(\varphi)$.  The number $d_{\widetilde{x}}(\varphi)$ is called the degree of the harmonic morphism at $\widetilde{x}$. The morphism is called harmonic if it is surjective and harmonic at every point $\widetilde{x}\in\widetilde{\Gamma}$.  If this is the case, then $\deg(\varphi)=\displaystyle\sum_{\widetilde{x}\in\varphi^{-1}(x)}d_{\widetilde{x}}(\varphi)$ is independent of $x$, and is called the degree of the harmonic morphism $\varphi$. An unramified harmonic morphism is called \emph{free} if it is of local degree $1$ over each point $x\in\Gamma$. 
\end{definition}
A free harmonic morphism is a covering space in the topological sense  that preserves adjacency meaning that it is an isomorphism in the neighbourhood of every vertex of $\widetilde{\Gamma}$. In other words, for any pair of vertices $\widetilde{v}$ and $v$ with $\varphi(\widetilde{v})=v$  and for each edge $e\in E(\Gamma)$ attached to $v$ there is a unique edge $\widetilde{e}\in E(\widetilde{\Gamma})$ attached to $\widetilde{v}$ that maps to $e$.  
We now define a harmonic Galois cover of a graph as in \cite{LUZ24}, Definition 1.4 for $G$ a finite group. 
\begin{definition}\label{tcover}
A harmonic map $\varphi:\widetilde{\Gamma}\to\Gamma$ of metric graphs is called a $G$-Galois cover if there is a $G$-action on $\widetilde{\Gamma}$ (see \cite{LUZ24}, Definition 1.2) such that the harmonic morphism $\varphi$ is $G$-invariant and $G$ acts transitively on each fiber $\varphi^{-1}(x)$.  
\end{definition}
In this article we are interested in the case where $G=\Z_n$ is a cyclic group in which case we speak of a cyclic $\Z_n$ -cover of metric graphs. 
Given an unramified $\Z_n$-cover $\varphi:\widetilde{\Gamma}\to\Gamma$ we define the dilation cycle as follows:
\begin{definition}\label{dilcyc}
Let $\varphi:\widetilde{\Gamma}\to\Gamma$ be a harmonic $\Z_n$-cover. Then
\[\gamma(\varphi)=\{x\in\Gamma| \exists\widetilde{x}\in\varphi^{-1}(x), \widetilde{v}\in T_{\widetilde{x}}(\widetilde{\Gamma}) \text{ such that }d_{\widetilde{v}}(\varphi)>1\}\]
More generally we define
\[\gamma_u(\varphi)=\{x\in\Gamma| \exists\widetilde{x}\in\varphi^{-1}(x), \widetilde{v}\in T_{\widetilde{x}}(\widetilde{\Gamma}) \text{ such that }d_{\widetilde{v}}(\varphi)=u\}\]
for every $1\leq u\leq n$. 
\end{definition}
\begin{definition}\label{genus}
An augmented metric graph is a metric graph $\Gamma$, together with a function $g:\Gamma\to\N_0$ called the genus function, such that $g(x)=0$ for all but finitely many $x\in\Gamma$. If the genus function $g$ is identically zero, then the graph $\Gamma$ is called unaugmented. 
\end{definition}
In what follows we consider the unramified covers $\varphi:\widetilde{\Gamma}\to\Gamma$. Let the genuera of the graphs be $\widetilde{g}$ and $g$ respectively. This morphism is called unramified if $\varphi^*(K_{\Gamma})=K_{\widetilde{\Gamma}}$. Explicitely, given a point $\widetilde{x}\in\widetilde{\Gamma}$, the ramification of $\varphi$ at $\widetilde{x}$ is
\[R_{\widetilde{x}}= (2-2\widetilde{g}(\widetilde{x}))-d_{\widetilde{x}}(\varphi)(2-2g(\varphi(\widetilde{x})))-\displaystyle\sum_{\widetilde{v}\in T_{\widetilde{x}}(\widetilde{\Gamma})}(d_{\widetilde{v}}(\varphi)-1),\]
in other words $R=K_{\widetilde{\Gamma}}-\varphi^*(K_{\Gamma})$ is the ramification divisor. The morphism $\varphi$ is unramified if $R=0$, or, equivalently, if $R_{\widetilde{x}}=0$ for all $\widetilde{x}\in\widetilde{\Gamma}$. 
\begin{lemma}\label{dilcyc1}
Let $\varphi:\widetilde{\Gamma}\to\Gamma$ be an unramified $\Z_n$-cover of metris graphs with $\Gamma$ unaugmented, then
\begin{enumerate}
\item If $x\notin \gamma(\varphi),$ then $\varphi^{-1}(x)$ consists of at most $n$ points of genus 0.
\item If $x\in \gamma(\varphi),$ then $\varphi^{-1}(x)$ consists of points of genus $\geq \frac{1}{2}\deg_{\gamma(\varphi)}-(d_{\widetilde{x}}-1)$.
\end{enumerate}
Here $\deg_{\gamma(\varphi)}$ denotes the number of tangent vectors at $x$ contained in $\gamma(\varphi)$.
\end{lemma}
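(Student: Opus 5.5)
The plan is to read both statements off the local unramifiedness condition $R_{\widetilde{x}}=0$, using that $\Gamma$ is unaugmented, so $g(x)=0$ for every $x\in\Gamma$. With $g(x)=0$ the condition becomes
\[
2-2\widetilde{g}(\widetilde{x}) \;=\; 2d_{\widetilde{x}}(\varphi)-\sum_{\widetilde{v}\in T_{\widetilde{x}}(\widetilde{\Gamma})}\bigl(d_{\widetilde{v}}(\varphi)-1\bigr),
\]
which can be solved for the genus:
\[
\widetilde{g}(\widetilde{x}) \;=\; \tfrac12\sum_{\widetilde{v}}\bigl(d_{\widetilde{v}}(\varphi)-1\bigr)-\bigl(d_{\widetilde{x}}(\varphi)-1\bigr).
\]
The whole proof then reduces to evaluating the sum $\sum_{\widetilde v}(d_{\widetilde v}-1)$ in the two cases. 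I will also use that the $\Z_n$-action is transitive on fibres, so all points of $\varphi^{-1}(x)$ look alike: they have equal genus, local degree and dilation profile.

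\emph{Case (1).} If $x\notin\gamma(\varphi)$, every tangent vector $\widetilde v$ above $x$ has $d_{\widetilde v}=1$. The sum therefore vanishes, and the formula gives $\widetilde g(\widetilde x)=1-d_{\widetilde x}$. Since $\widetilde g(\widetilde x)\ge 0$ and $d_{\widetilde x}\ge1$, this forces $d_{\widetilde x}=1$ and $\widetilde g(\widetilde x)=0$. Because $\deg\varphi=n$ is the sum of the local degrees over the fibre, the fibre has exactly $n$ points, hence at most $n$, each of genus $0$.

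\emph{Case (2).} Here $x\in\gamma(\varphi)$. Each tangent vector $\widetilde v$ at $\widetilde x$ mapping into $\gamma(\varphi)$ has $d_{\widetilde v}\ge 2$, so it contributes at least $1$ to the sum. Harmonicity guarantees that each tangent vector $v$ at $x$ has at least one preimage in $T_{\widetilde x}$. Hence the number of such $\widetilde v$ is at least $\deg_{\gamma(\varphi)}$, the number of tangent directions at $x$ lying in $\gamma(\varphi)$. The remaining terms are all nonnegative. This gives $\sum_{\widetilde v}(d_{\widetilde v}-1)\ge \deg_{\gamma(\varphi)}$, and substituting into the genus formula yields
\[
\widetilde g(\widetilde x)\;\ge\; \tfrac12\deg_{\gamma(\varphi)}-\bigl(d_{\widetilde x}-1\bigr).
\]
By the transitivity of the action, this bound holds at every point of the fibre.

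The main obstacle is the identification in case (2). One has to check that a tangent direction of $x$ lies in $\gamma(\varphi)$ exactly when some (equivalently, by $\Z_n$-symmetry, every) lift has dilation factor $>1$, so that each such direction at $x$ really yields a lift at the chosen $\widetilde x$ with $d_{\widetilde v}\ge2$. I would settle this from Definition~\ref{dilcyc}: dilation factors are constant along the interior of an edge, so $\gamma(\varphi)$ is a union of closed edges. A direction $v$ pointing into such an edge then has a lift at $\widetilde x$, by harmonicity, and every lift of it has dilation $>1$, by $G$-equivariance. With that in hand, the remainder is the bookkeeping above.
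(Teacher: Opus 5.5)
Your proposal is correct and takes the same route as the paper: set $g(x)=0$ in the local condition $R_{\widetilde x}=0$ to get $2\widetilde g(\widetilde x)+2(d_{\widetilde x}-1)=\sum_{\widetilde v}(d_{\widetilde v}-1)$, then read case (1) off the vanishing of the right-hand side. You also spell out the count for case (2) that the paper leaves implicit: harmonicity gives a lift at $\widetilde x$ of each direction in $\gamma(\varphi)$, and $\Z_n$-equivariance forces that lift to have dilation at least $2$. Your ``exactly $n$ points'' relies on $\deg\varphi=n$, whereas the required ``at most $n$'' already follows from the transitive action of an order-$n$ group on the fibre.
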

\begin{proof}
By the fact that the genus function of $\Gamma$ is trivial, using the ramification formula we obtain
\[2g(\widetilde{x})+2(s-1)=\displaystyle\sum_{\widetilde{v}\in T_{\widetilde{x}}(\widetilde{\Gamma})}(d_{\widetilde{v}}(\varphi)-1),\] 
where $s=d_{\widetilde{x}}$. Therefore if $d_{\widetilde{x}}>1$ and $x\notin\gamma(\varphi)$, then the above equality implies that $g(\widetilde{x})<0$ which is not possible. Therefore if $d_{\widetilde{x}}>1$, then $\varphi(\widetilde{x})\in \gamma(\varphi)$. On the other hand, if $d_{\widetilde{x}}=1$, then the above equality implies that $g(\widetilde{x})=0$. 
\end{proof}

One of the most useful properties of Galois covers of algebraic curves is that in such covers, the ramification indices of ramification points above a given branch point are equal. The next definition gives such condition for covers of metric graphs.

\begin{definition}\label{strong}
An unramified $\Z_n$-cover of metric graphs $\varphi:\widetilde{\Gamma}\to\Gamma$ is called strongly unramified if for every $x\in\Gamma$ and every $\widetilde{x}\in\varphi^{-1}(x)$, the degree $d_{\widetilde{x}}$ depends only on $x$. In other words, if $\widetilde{x}_1, \widetilde{x}_2\in\varphi^{-1}(x)$, then $d_{\widetilde{x}_1}=d_{\widetilde{x}_2}$. We therefore denote this degree by $d_x$. If $\varphi$ is strongly unramified, then $\deg(\varphi)$ is a multiple of $d_{x}$. In particular, if $\deg(\varphi)=p$ is a prime number, then each $d_{x}$ is equal to 1 or $p$. 
\end{definition}

\begin{remark}\label{double}
Note that by Lemma 5.4 of \cite{JL}, any harmonic double cover is  strongly unramified. 
\end{remark}

\begin{lemma}\label{dilcyc strong}
Let $\varphi:\widetilde{\Gamma}\to\Gamma$ be a strongly unramified $\Z_n$-cover of metris graphs with $\Gamma$ unaugmented, then
\begin{enumerate}
\item If $x\notin \gamma(\varphi),$ then $\varphi^{-1}(x)$ consists of precisely $n$ points of genus 0.
\item If $x\in \gamma(\varphi),$ then $\varphi^{-1}(x)$ consists of $\frac{n}{d_{x}}$ points of genus \par  $\geq\frac{1}{2}\deg_{\gamma(\varphi)}-(d_{x}-1)$.
\end{enumerate}
\end{lemma}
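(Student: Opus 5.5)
The plan is to combine the fiber-counting identity for harmonic morphisms with the local ramification formula used in the proof of Lemma \ref{dilcyc1}, and to use strong unramifiedness to make all local degrees in a fiber equal. First fix $x\in\Gamma$. Since $\Z_n$ acts transitively on $\varphi^{-1}(x)$ and $\varphi$ is $\Z_n$-invariant, the local degrees $d_{\widetilde{x}}$ for $\widetilde{x}\in\varphi^{-1}(x)$ agree. Strong unramifiedness gives this directly, and we denote the common value by $d_x$. The degree formula in Definition \ref{harmonic}, $\deg(\varphi)=\sum_{\widetilde{x}\in\varphi^{-1}(x)}d_{\widetilde{x}}(\varphi)$, together with $\deg(\varphi)=n$ (the $\Z_n$-cover has degree $n$ since generic fibers are free orbits), then gives $|\varphi^{-1}(x)|\cdot d_x=n$. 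Hence $|\varphi^{-1}(x)|=n/d_x$.

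For (1), let $x\notin\gamma(\varphi)$. By the argument of Lemma \ref{dilcyc1}, every $\widetilde{x}$ over $x$ has $d_{\widetilde{x}}=1$. Indeed, the identity
\[2g(\widetilde{x})+2(d_{\widetilde{x}}-1)=\sum_{\widetilde{v}\in T_{\widetilde{x}}(\widetilde{\Gamma})}(d_{\widetilde{v}}(\varphi)-1)\]
has right side $0$ because all $d_{\widetilde{v}}=1$. This forces $g(\widetilde{x})=0$ and $d_{\widetilde{x}}=1$. Thus $d_x=1$, and the fiber has exactly $n$ points, all of genus $0$. This sharpens ``at most $n$'' to ``precisely $n$''.

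For (2), let $x\in\gamma(\varphi)$. The count $n/d_x$ comes from the first step. For the genus bound I would rewrite the same identity as
\[g(\widetilde{x})=\tfrac12\sum_{\widetilde{v}}(d_{\widetilde{v}}-1)-(d_x-1).\]
Each tangent vector $\widetilde{v}$ at $\widetilde{x}$ lying over a tangent direction at $x$ contained in $\gamma(\varphi)$ has dilation $d_{\widetilde{v}}\ge 2$, hence contributes at least $1$ to the sum. I then need at least $\deg_{\gamma(\varphi)}$ such $\widetilde{v}$ at $\widetilde{x}$. For each tangent direction $v$ at $x$ in $\gamma(\varphi)$, harmonicity gives some $\widetilde{v}$ at $\widetilde{x}$ over $v$, since $d_{\widetilde{x},v}=d_x\ge1$.

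The main obstacle is showing that this lift is actually dilated, i.e. that a direction in $\gamma(\varphi)$ is dilated at every preimage and not just at some. I would handle this with the $\Z_n$-action. Group elements preserve dilation factors and permute the fiber transitively, so if one preimage point has a dilated tangent vector over $v$, then every preimage does. This yields $\sum_{\widetilde{v}}(d_{\widetilde{v}}-1)\ge\deg_{\gamma(\varphi)}$, and so $g(\widetilde{x})\ge\frac12\deg_{\gamma(\varphi)}-(d_x-1)$.
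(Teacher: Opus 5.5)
Your argument is correct and is the one the paper intends: the paper states this lemma without proof, relying on the identity $2g(\widetilde{x})+2(d_{\widetilde{x}}-1)=\sum_{\widetilde{v}}(d_{\widetilde{v}}-1)$ from the proof of Lemma \ref{dilcyc1}, and your count $|\varphi^{-1}(x)|\,d_x=n$ and your use of the transitive $\Z_n$-action to get a dilated lift over every direction of $\gamma(\varphi)$ at every preimage of $x$ supply exactly the steps the paper leaves implicit. The only thing to fix is your justification of $\deg(\varphi)=n$, since free generic orbits do not imply it (the map $t\mapsto 2t$ from $\R/\tfrac{nL}{2}\Z$ to $\R/L\Z$, with $\Z_n$ acting by $t\mapsto t+\tfrac{L}{2}$, has all orbits free but degree $2n$); you should instead take $\deg(\varphi)=n$ as part of what a degree-$n$ $\Z_n$-cover means, as the paper implicitly does.
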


\section{tropical abelian varieties with group action}
In this section we consider tropical abelian varieties and group actions on them. This will be used in the next section to introduce the tropical Prym variety of tropical Galois covers $\varphi:\widetilde{\Gamma}\to\Gamma$ by considering the group action on the tropical Jacobian $\jac(\widetilde{\Gamma})$. 
\begin{definition}\label{tropabvar}
A real torus is a quotient $\R^n/\Lambda$, where $\Lambda\subset\R^n$ is a lattice in $\R^n$ of rank $n$. A polarization $Q$ on a reall torus is a $\Lambda\to (\Z^n)^*$. A reall torus is called a tropical abelian variety if $Q$ induces a positive definite and symmetric bilinear form. The polarization is called principal if it induces an isomorphism from $\Lambda$ to $(\Z^n)^*$. We sometimes denote a principally polarized tropical abelian variety (pptav) as a pair $(\R^n/\Lambda, Q)$.
\end{definition}

\begin{definition}\label{isomtrop}
 Two tropical abelian varieties $(\R^n/\Lambda, Q)$ and $(\R^n/\Lambda^{\prime}, Q^{\prime})$ are isomorphic if there exists $h\in\gl(n,\R)$ such that $h(\Lambda)=\Lambda^{\prime}$ and $hQh^t=Q^{\prime}$. An automorphism of a pptav $A=(\R^n/\Lambda, Q)$ is given by an element $h\in\gl(n,\R)$ such that $h(\Lambda)=\Lambda$ and $hQh^t=Q$, i.e., an element $h$ which fixes the lattice and the positive definite quadratic form $Q$. The set of all automorphisms of a pptav forms a group under composition (equivalently matrix multiplication) which we denote by $\aut(A)$.
\end{definition}

\begin{remark}\label{tropabvaresp}
Every tropical abelian variety $(\R^n/\Lambda, Q)$ can be written in the form $(\R^n/\Z^n, Q^{\prime})$, where we set $Q^{\prime}=hQh^t$, and $h\in\gl(n,\R)$ is such that $h(\Lambda)=\Z^g$. From now on we therefore consider the tropical abelian varieties in the form $(\R^n/\Z^n, Q)$, where $Q$ is uniquely determined up to arithmetic equivalence i.e., up to the identification $Q\sim hQh^t$ for some $h\in\gl(n,\Z)$. 
\end{remark}

\begin{definition}\label{group action}
Let $A=(\R^g/\Z^g, Q)$ be a pptav and $G$ a finite gorpup. We say that $G$ acts on $A$ if  $G$ admits a group monomorphism $G\hookrightarrow \aut(A)$. 
\end{definition}

\section{tropical Prym variety and Abel-Pram map associated to cyclic covers}
{\bf Construction of tropical Jacobian variety.} The construction of the Jacobian of a metric graph as a pptav can be taken from \cite{LU21} or  \cite{BF11} . We briefly recall this construction here as follows: Let $A$ be either $\Z$ or $\R$. The group of $A$-valued differential forms $\Omega(\Gamma,A)$ on $\Gamma$ is a subgroup of the free $A$-module with basis $\{de| e\in E(\Gamma)\}$ defined as
\[\Omega(\Gamma,A)=\{\displaystyle\sum_{e\in E(\Gamma)}\omega_e de|\sum_{t(e)=v}\omega_e=\sum_{s(e)=v}\omega_e \forall v\in V(\Gamma)\}\]
This group can be identified with the first homology $H_1(\Gamma, A)$ of the graph $\Gamma$ with coefficeints in $A$. By lemma 2.1 in \cite{BF11}, there is a perfect pairing on $H_1(\Gamma, A)\times \Omega(\Gamma,A)$ which is induced from an integration pairing 
\[[\cdot,\cdot]: C_1(\Gamma, A)\times \Omega(\Gamma,A)\to\R\]
by
\[ [\gamma,\omega]= \displaystyle\int_{\gamma}\omega=\sum_{e\in E(\Gamma)}\gamma_e\omega_e l(e), \gamma=\sum_{e\in E(\Gamma)}\gamma_e e, \omega=\sum_{e\in E(\Gamma)}\omega_e de\]
Here $C_1(\Gamma, A)=A^{E(\Gamma)}$ and $C_0(\Gamma, A)=A^{V(\Gamma)}$ and $H_1(\Gamma, A)$ is just the kernel of the map 
\[d:C_1(\Gamma, A)\to C_0(\Gamma, A)\]
\[e\mapsto t(e)-s(e)\]

The Jacobian variety of $\Gamma$ is then defined as
\[\jac(\Gamma)=\Omega(\Gamma)^*/H_1(\Gamma, \Z),\]
Let $\varphi:\widetilde{\Gamma}\to\Gamma$ be an unramfied cyclic $\Z_n$ -cover of metric graphs.  By Definition \ref{tcover}, this cover corresponds to an automorphism $\tau:\widetilde{\Gamma}\to\widetilde{\Gamma}$ of order $n$ which also induces an automorphism of order $n$ of the Jacobian
\begin{equation}\label{}
\tau:\jac(\widetilde{\Gamma})\to \jac(\widetilde{\Gamma})
\end{equation}
which we again denote by $\tau$. Indeed we consider $\tau$ to be the generator of the group $G=\Z_n$ that acts on $\jac(\widetilde{\Gamma})$. Note that we have also the norm map $\nm_{\varphi}$. \par Using the norm map above we can define the Prym variety associated to an unramified cyclic cover.

\begin{align}\label{}
\nm_{\varphi}:\dv(\widetilde{\Gamma})\to \dv(\Gamma)\nonumber\\
\sum a_i p_i\mapsto \sum a_i \varphi(p_i)
\end{align}
We denote the induced norm map $\nm_{\varphi}:\pic^0(\widetilde{\Gamma})\to\pic^0(\Gamma)$ also by $\nm_{\varphi}$.
Note that we have also the pull-back map $\varphi^*:\jac(\Gamma)\to \jac(\widetilde{\Gamma})$ which is induced by the pull-back map of divisors $\varphi^*:\dv(\Gamma)\to \dv(\widetilde{\Gamma})$.

\begin{definition}\label{prym def}
Let $\varphi:\widetilde{\Gamma}\to\Gamma$ be an unramfied cyclic cover of degree $n$ of tropical curves (or metric graphs). 
We define the \emph{Prym variety} $\py(\widetilde{\Gamma}/\Gamma)$ of $\varphi$ as the connected component of the identity of $\ker\nm_{\varphi}$, in other words,
\begin{equation}\label{prymdef}
\py(\widetilde{\Gamma}/\Gamma):=(\ker\nm_{\varphi})^{\circ}
\end{equation}
\end{definition}
The Prym variety $\py(\widetilde{\Gamma}/\Gamma)$ has the structure of a principally polarized tropical abelian variety (pptav). Let us describe this structure here. Let us denote $\widetilde{\Lambda}=\Omega(\widetilde{\Gamma},\Z), \widetilde{\Lambda^{\prime}}=H_1(\widetilde{\Gamma},\Z), \Lambda=\Omega(\Gamma,\Z)$ and $\Lambda^{\prime}=H_1(\Gamma,\Z)$. Next we consider the pushforward and pull-back maps $\varphi_*:H_1(\widetilde{\Gamma},\Z)\to H_1(\Gamma,\Z)$ and $\varphi^*:\Omega(\Gamma,\Z)\to \Omega(\widetilde{\Gamma},\Z)$ given by
\[\varphi_*(\widetilde{e})=\varphi(\widetilde{e}),\hspace{1cm} \varphi^*(de)=d\widetilde{e}^1+d\widetilde{e}^2+d\widetilde{e}^3,\]
for $\widetilde{e}\in E(\widetilde{\Gamma})$ and $e\in E(\Gamma)$ and the edges $\widetilde{e}^i$ are the preimages of $e$ in $\widetilde{\Gamma}$ and then extending by linearity. The norm map $\nm_{\varphi}:\pic^0(\widetilde{\Gamma})\to\pic^0(\Gamma)$ is indeed induced by these maps, in fact we have
\[\py(\widetilde{\Gamma}/\Gamma)=\frac{\ker({\varphi^*}^{\vee}:\Omega(\widetilde{\Gamma})\to\Omega(\Gamma))}{\ker(\varphi_*:H_1(\widetilde{\Gamma},\Z)\to H_1(\Gamma,\Z))}.\]
By Theorem 2.2.7 in \cite{LU21} (which is proven for general finite covers and general Prym varieties as here), this variety has an induced polarization from $\jac(\widetilde{\Gamma})$ and hence is a pptav with the inner product $(.,.)_P$ satisfying $(.,.)_P=\frac{1}{n}(.,.)_{\widetilde{\Gamma}}$. \par

Let us explore the tropical Prym variety for cyclic covers further. For the sake of simplicity, let us denote $J=\jac(\Gamma)$ and $\widetilde{J}=\jac(\widetilde{\Gamma})$. We introduce another morphism $\nm_G:\widetilde{J}\to \widetilde{J}$ as
\[\nm_G:=\sum_{g\in G}g=1+\tau+\cdots+\tau^{n-1}.\]
With these notation we have

\begin{lemma}\label{}
\begin{enumerate}
\item $\nm_{\varphi}\circ\varphi^*=n\cdotp 1_{J}$.
\item $\varphi^*\circ\nm_{\varphi}=\nm_G$.
\item $\varphi^*(J)={(\widetilde{J}^G)}^{\circ}$ and $\py(\widetilde{\Gamma}/\Gamma)\cap {\widetilde{J}}^G\subseteq P[n]$. 

\end{enumerate}
\end{lemma}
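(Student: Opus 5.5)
The plan is to verify (1) and (2) at the level of divisors and then descend to $\pic^0$, and to deduce (3) from (1), (2) together with a dimension count.

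For (1), take a point $x\in\Gamma$. The pull-back is $\varphi^*(x)=\sum_{\widetilde{x}\in\varphi^{-1}(x)}d_{\widetilde{x}}(\varphi)\,\widetilde{x}$. Applying $\nm_{\varphi}$ gives $\big(\sum_{\widetilde{x}\mapsto x}d_{\widetilde{x}}\big)x=\deg(\varphi)\,x=n\,x$, using the degree formula of Definition~\ref{harmonic}. This identity is linear and holds on $\dv(\Gamma)$, so it descends to $J$.

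For (2), take $\widetilde{x}\in\widetilde{\Gamma}$ with $x=\varphi(\widetilde{x})$. Then $\varphi^*\nm_{\varphi}(\widetilde{x})=\varphi^*(x)=\sum_{\widetilde{y}\in\varphi^{-1}(x)}d_{\widetilde{y}}\,\widetilde{y}$. We must compare this with $\sum_{k=0}^{n-1}\tau^k\widetilde{x}$. Since $G$ acts transitively on the fiber, the orbit of $\widetilde{x}$ is all of $\varphi^{-1}(x)$. Each $\widetilde{y}$ is hit exactly $|\mathrm{Stab}(\widetilde{y})|=n/|\varphi^{-1}(x)|$ times. So we need $d_{\widetilde{y}}=|\mathrm{Stab}(\widetilde{y})|$, which is the step requiring care. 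First, $\tau$ is an automorphism commuting with $\varphi$, so it preserves local degrees, and all $d_{\widetilde{y}}$ in a fiber coincide (the strongly unramified property is automatic here). Then $\sum d_{\widetilde{y}}=n$ forces $d_{\widetilde{y}}=n/|\varphi^{-1}(x)|$, which equals the stabilizer order. This argument relies on transitivity and on the identification of $\tau$ with the Galois action from Definition~\ref{tcover}. Both sides are then equal on divisors, hence on $\widetilde{J}$.

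For (3), first note that $\tau\varphi^*=\varphi^*$, because $\varphi\tau=\varphi$. Hence $\varphi^*(J)\subseteq\widetilde{J}^G$. Moreover $\varphi^*(J)$ is connected as the image of a connected torus, so $\varphi^*(J)\subseteq(\widetilde{J}^G)^\circ$.

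For the reverse inclusion, let $y\in(\widetilde{J}^G)^\circ$. Then $\nm_G(y)=ny$, and by (2) $ny=\varphi^*(\nm_{\varphi}y)\in\varphi^*(J)$. Since $(\widetilde{J}^G)^\circ$ is a connected compact real torus, multiplication by $n$ is surjective on it. Therefore $(\widetilde{J}^G)^\circ=n(\widetilde{J}^G)^\circ\subseteq\varphi^*(J)$.

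For the intersection statement, take $y\in P\cap\widetilde{J}^G$, where $P=\py(\widetilde{\Gamma}/\Gamma)\subseteq\ker\nm_{\varphi}$. Then $ny=\nm_G(y)=\varphi^*\nm_{\varphi}(y)=0$, so $y\in P[n]$.

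The main obstacle is the fiber-counting identity in (2): matching the multiplicities $d_{\widetilde{y}}$ with stabilizer orders. This depends on using that the $G$-action on $\widetilde{\Gamma}$ from \cite{LUZ24} is by harmonic automorphisms over $\Gamma$, so that local degrees are $G$-invariant. Everything else is formal.
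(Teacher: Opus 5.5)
The paper states this lemma without proof, so there is no argument of the author's to compare against. Your proposal is correct and supplies one, along the lines of the classical argument for cyclic covers of curves. The fibre count in (2) is right: $\tau$ is an isometry commuting with $\varphi$, so local degrees are constant on fibres, and you correctly match them with stabilizer orders. The divisibility argument for $(\widetilde J^G)^\circ\subseteq\varphi^*(J)$ and the one-line proof that $\py(\widetilde\Gamma/\Gamma)\cap\widetilde J^G\subseteq P[n]$ are both sound.

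A few points should be tightened.

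\textbf{Where the degree hypothesis enters.} The hypothesis $\deg\varphi=n=|G|$ is used only in the identity $\sum_{\widetilde y\in\varphi^{-1}(x)}d_{\widetilde y}=n$, in both (1) and (2). Definition~\ref{tcover} alone does not force it. For example, the trivial $\Z_n$-action on $\widetilde\Gamma=\Gamma$ with $\varphi=\mathrm{id}$ satisfies that definition, yet $\varphi^*\circ\nm_\varphi=1$ while $\nm_G=n$. You should therefore cite the standing assumption ``cyclic cover of degree $n$'' explicitly at that step.

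\textbf{Justifying $\tau\varphi^*=\varphi^*$.} $\tau$ acts on $\widetilde J$ by push-forward of divisors, whereas $\varphi\circ\tau=\varphi$ directly gives a statement about pull-backs. Justify the identity either as $\tau_*\varphi^*=(\tau^{-1})^*\varphi^*=(\varphi\circ\tau^{-1})^*=\varphi^*$, or directly from the fibre formula for $\varphi^*(x)$ together with the $\tau$-invariance of local degrees you established in (2).

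\textbf{Your plan does not match your proof of (3).} Your opening sentence promises a dimension count and the use of (1). The argument you actually give uses neither: it rests on divisibility of $(\widetilde J^G)^\circ$. That argument is correct and cleaner, since $(\widetilde J^G)^\circ$ is the identity component of a closed subgroup of a compact torus and hence itself a torus. Adjust the plan accordingly.

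\textbf{Passing to $\pic^0$.} Descending the divisor identities to $\pic^0$ tacitly uses that $\varphi^*$, $\nm_\varphi$ and $\tau_*$ send principal divisors to principal divisors. This is standard for harmonic morphisms, and the paper also takes it for granted, but it is worth one sentence.
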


We have the following

\begin{proposition}\label{prym cyc}
Let $\varphi:\widetilde{\Gamma}\to\Gamma$ be an unramfied cyclic cover of degree $n$ of tropical curves (or metric graphs). 
Let $\tau$ be the corresponding automorphism of $\jac(\widetilde{\Gamma})$ as above. We have
\begin{equation}\label{prymdef}
\py(\widetilde{\Gamma}/\Gamma)=\ker(\nm_G)^0=\im(1-\tau)
\end{equation}
\end{proposition}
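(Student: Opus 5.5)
We will prove the two equalities $\py(\widetilde{\Gamma}/\Gamma)=\ker(\nm_G)^{\circ}$ and $\ker(\nm_G)^{\circ}=\im(1-\tau)$ separately. For both we use the preceding lemma, together with the fact that all objects involved are compact connected real tori or closed subgroups of $\widetilde{J}$. Their connected components of the identity are therefore subtori, determined by their tangent spaces in $\Omega(\widetilde{\Gamma})^*$. This lets us reduce everything to linear algebra on the universal cover.

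\emph{First equality.} By part (2) of the lemma, $\nm_G=\varphi^*\circ\nm_{\varphi}$, so $\ker\nm_{\varphi}\subseteq\ker\nm_G$ and hence $(\ker\nm_{\varphi})^{\circ}\subseteq(\ker\nm_G)^{\circ}$. For the reverse inclusion, take $x\in\ker\nm_G$. Then $\nm_{\varphi}(x)\in\ker\varphi^*$. Part (1) gives $\nm_{\varphi}\circ\varphi^*=n$, so $\ker\varphi^*\subseteq J[n]$ is finite. Thus $\nm_{\varphi}$ maps the connected group $(\ker\nm_G)^{\circ}$ into a finite set containing $0$, and continuity forces the image to be $\{0\}$. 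Hence $(\ker\nm_G)^{\circ}\subseteq\ker\nm_{\varphi}$, and since it is connected and contains $0$, it lies in $(\ker\nm_{\varphi})^{\circ}$.

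\emph{Second equality.} Since $\nm_G\circ(1-\tau)=1-\tau^n=0$, we get $\im(1-\tau)\subseteq\ker\nm_G$. The set $\im(1-\tau)$ is the continuous image of the connected torus $\widetilde{J}$, so it is connected, contains $0$, and is a compact subtorus; hence $\im(1-\tau)\subseteq(\ker\nm_G)^{\circ}$. Both are subtori, so equality follows once their dimensions agree. Lift $\tau$ to a linear map $T$ of $V=\Omega(\widetilde{\Gamma})^*$ with $T^n=1$. Then $V$ is a $\mathbb{Q}$-rational (indeed $\R$-) representation of the finite cyclic group, hence semisimple. Consequently
\[V=\ker(1-T)\oplus\im(1-T),\qquad \ker(1+T+\cdots+T^{n-1})=\im(1-T).\]
Both statements are eigenspace computations after complexifying. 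On the eigenspace for $\zeta=1$, the operator $1-T$ vanishes and $\nm_G$ is multiplication by $n$. On an eigenspace for $\zeta\neq1$, $1-T$ is invertible and $\nm_G=\sum\zeta^k=0$. Since the tangent space of $(\ker\nm_G)^{\circ}$ is $\ker(1+T+\cdots+T^{n-1})$ and that of $\im(1-\tau)$ is $\im(1-T)$, the dimensions agree, which finishes the proof.

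The step I expect to need the most care is the identification of tangent spaces with these linear kernels and images. One must check that for a lattice-preserving endomorphism $f$ of a real torus, $(\ker f)^{\circ}$ is exactly the image of $\ker(df)$, and $\im f$ is the image of $\im(df)$. This is standard, since $\ker(df)$ is a rational subspace and so its image is closed. As a byproduct, the semisimplicity decomposition also recovers part (3) of the lemma, and it gives the correct dimension $\widetilde{g}-g$ for the Prym variety.
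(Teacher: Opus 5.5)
Your proof is correct, and it is more complete than what the paper provides. The paper states this proposition without proof, leaning on the (also unproved) preceding lemma and on the paragraph after the statement. That paragraph works on the universal cover. It identifies $\ker\mu$ with $\sigma_*^{-1}H_1(\Gamma,\Z)=\Omega(\widetilde{\Gamma}/\Gamma)^*+\frac{1}{n}\sigma^*H_1(\Gamma,\Z)$, using that $\sigma_*\sigma^*$ is multiplication by $n$. So the kernel is a finite union of translates, by pulled-back $n$-torsion points, of the subtorus $\Omega(\widetilde{\Gamma}/\Gamma)^*/(H_1(\widetilde{\Gamma},\Z)\cap\Omega(\widetilde{\Gamma}/\Gamma)^*)$. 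That this subtorus equals $\im(1-\tau)$ is then simply asserted.

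Your route differs in both halves. For $(\ker\nm_\varphi)^{\circ}=(\ker\nm_G)^{\circ}$ you never leave the torus level: identities (1) and (2) of the lemma, finiteness of $\ker\varphi^*\subseteq J[n]$, and connectedness are enough. For $(\ker\nm_G)^{\circ}=\im(1-\tau)$ you use the eigenspace decomposition of the lift $T$ (with $T^n=1$) to get $\ker(1+T+\cdots+T^{n-1})=\im(1-T)$. You combine this with the standard correspondence between identity components of kernels/images of torus endomorphisms and kernels/images of their linear lifts.

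This supplies exactly the step the paper skips. Lifting identity (2) gives $\Omega(\widetilde{\Gamma}/\Gamma)^*=\ker\sigma_*=\ker(1+T+\cdots+T^{n-1})$, since the lift of $\varphi^*$ is injective by (1). Your eigenspace computation then identifies this with $\im(1-T)$.

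The paper's lattice-level description also carries information about the full, possibly disconnected, kernel, which you do not need. Your argument instead gives a self-contained, rigorous proof of the two stated equalities.
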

Note that the morphism $\varphi$ induces also a morphism $\mu:\jac(\widetilde{\Gamma})\to \jac(\Gamma)$ which is in turn induced by the surjective map $\sigma_*:\Omega(\widetilde{\Gamma})^*\to \Omega(\Gamma)^*$, the dual of the natural pull-back map of harmonic 1-forms $\sigma^*:\Omega(\Gamma)\to \Omega(\widetilde{\Gamma})$ which is injective. Let $\Omega(\widetilde{\Gamma}/\Gamma)^*$ be the kernel of $\sigma_*$. By quotienting this out by $H_1(\widetilde{\Gamma}, \Z)$ and $H_1(\Gamma, \Z)$ be obtain the map $\mu$. We may therefore identify $\ker\mu$ with $\sigma^{-1}_* H_1(\Gamma, \Z)$. Since the composition $\sigma_*\sigma^*$ is just multiplication by $n$, we obtain

\begin{equation}\label{}
\sigma^{-1}_* H_1(\Gamma, \Z)=\Omega(\widetilde{\Gamma}/\Gamma)^*+\frac{1}{n}\sigma^* H_1(\Gamma, \Z)
\end{equation}
By taking quotient, we obtain that $\ker\varphi_*$ is the translation of the torus 
\begin{equation}\label{}
\Omega(\widetilde{\Gamma}/\Gamma)^*/(H_1(\widetilde{\Gamma}, \Z)\cap \Omega(\widetilde{\Gamma}/\Gamma)^*)\subset \jac(\widetilde{\Gamma})
\end{equation}
 Then the kernel of $\mu:\jac(\widetilde{\Gamma})\to\jac(\Gamma)$ consists of a finite union of translations of the abelian subvariety $\im(1-\tau)$ by elements of $\sigma^*\jac_n(\Gamma)$. \\

If a group $G$ acts on a metric graph $\Gamma$, then $G$ acts also on the homology group $H_1(\Gamma, \R)$ and therefore on the Jacobian $\jac(\Gamma)$. In this section, we assume that $G$ is a finite abelian abelian group. We then have
\begin{gather}
H_1(\Gamma, \R)^+=H_1(\Gamma, \R)^{G}\\
H_1(\Gamma, \R)^-=H_1(\Gamma, \R)/H_1(\Gamma, \R)^+
\end{gather}
we have $H_1(\Gamma, \R)=H_1(\Gamma, \R)^+\oplus H_1(\Gamma, \R)^-$. In the same way, we can also define $H_1(\Gamma, \Z)^+$ and $H_1(\Gamma, \Z)^-$.  The group $G$ also acts on the space of 1-forms $\Omega(\Gamma)$ and we have the eingenspaces
\begin{gather}
\Omega(\Gamma)^+=\Omega(\Gamma)^{G}\\
\Omega(\Gamma)^-=\Omega(\Gamma)/\Omega(\Gamma)^{+}
\end{gather}

The group $H_1(\Gamma, \Z)$ carries an intesection form which we denote by $(.,.)_{\Gamma}$. Similarly, the  intesection form on $H_1(\widetilde{\Gamma}, \Z)$ is denoted by $(.,.)_{\widetilde{\Gamma}}$. Let $(.,.)_P$ be the intersection pairing on $\ker p_*$ corresponding to the principal polarization on $\py(\widetilde{\Gamma}/\Gamma)$. 

Now if $\varphi:\widetilde{\Gamma}\to\Gamma$ is an unramfied cyclic cover of degree $n$, then as explained earlier, there is a $\Z_n$-action on $\widetilde{\Gamma}$ and on $\jac(\widetilde{\Gamma})$ generated by $\tau$ introduced above. Proposition \ref{prym cyc} and the previous discussions then give
\begin{proposition}\label{prym comp}
Let $\varphi:\widetilde{\Gamma}\to\Gamma$ be an unramfied cyclic cover of degree $n$ of tropical curves (or metric graphs). Then it holds
\begin{equation}\label{Prym quot}
\py(\widetilde{\Gamma}/\Gamma)={\Omega(\widetilde{\Gamma})^-}^*/H_1(\widetilde{\Gamma}, \Z)^-
\end{equation}
\end{proposition}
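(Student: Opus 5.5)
Since the statement follows from Proposition \ref{prym cyc} and the description of $\py(\widetilde{\Gamma}/\Gamma)$ as a quotient of $\ker({\varphi^*}^{\vee})$ by $\ker\varphi_*$, I will prove it by identifying numerator and denominator separately with the $(-)$-parts. First I make the decomposition precise. Because $G=\langle\tau\rangle$ is finite and acts by lattice-preserving linear maps on $H_1(\widetilde{\Gamma},\R)$ and $\Omega(\widetilde{\Gamma})$, averaging gives a $G$-equivariant splitting. Here the ``$-$'' part is realised as a subspace: the complement of the invariants, namely $\ker\nm_G=\im(1-\tau)$ (over $\R$ these coincide, since $\nm_G$ is $n$ times the projection onto the invariants). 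I take $H_1(\widetilde{\Gamma},\Z)^-:=H_1(\widetilde{\Gamma},\Z)\cap H_1(\widetilde{\Gamma},\R)^-$. I take ${\Omega(\widetilde{\Gamma})^-}^*$ to be the annihilator of $\Omega(\widetilde{\Gamma})^+$ in $\Omega(\widetilde{\Gamma})^*$, which is the dual of the quotient $\Omega(\widetilde{\Gamma})^-$.

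Step 1 identifies the invariants. I claim $\Omega(\widetilde{\Gamma})^G=\varphi^*\Omega(\Gamma)$ and $H_1(\widetilde{\Gamma},\R)^G=\frac{1}{n}\varphi^*H_1(\Gamma,\R)$ as real spaces. For this I use that $G$ acts transitively on fibres, so a $G$-invariant form has equal coefficients on the $G$-orbit of edges over $e$. Such a form is therefore the pullback of a form on $\Gamma$. Harmonicity descends because $\varphi$ is unramified, hence locally free away from dilated edges. On dilated edges I compare $d_{\tilde e}$ weights, using part (3) of the preceding Lemma, $\varphi^*(J)=(\widetilde J^G)^\circ$, as a check. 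Step 2 dualises. The map ${\varphi^*}^\vee:\Omega(\widetilde{\Gamma})^*\to\Omega(\Gamma)^*$ has kernel equal to the annihilator of $\im\varphi^*=\Omega(\widetilde{\Gamma})^+$, which is ${\Omega(\widetilde{\Gamma})^-}^*$. This gives the numerator.

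Step 3 handles the denominator, $\ker(\varphi_*)\cap H_1(\widetilde{\Gamma},\Z)$. By Proposition \ref{prym cyc} and the preceding Lemma (1)--(2), I have $\varphi^*\varphi_*=\nm_G$ on homology. Because $\varphi^*$ is injective rationally (as $\varphi_*\varphi^*=n$), it follows that $\ker\varphi_*=\ker\nm_G$ on $H_1(\widetilde{\Gamma},\Z)$. Over $\R$, $\ker\nm_G$ is exactly the $(-)$-part, so the integral kernel is $H_1(\widetilde{\Gamma},\Z)^-$. Finally, under the integration embedding $H_1(\widetilde{\Gamma},\Z)\hookrightarrow\Omega(\widetilde{\Gamma})^*$, which is $G$-equivariant, $H_1^-$ lands in ${\Omega(\widetilde{\Gamma})^-}^*$. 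Combining the two identifications gives \eqref{Prym quot}, with the connected component matching $\im(1-\tau)$.

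The main obstacle is the dilated case in Step 1, showing that every invariant harmonic form is a pullback. The pullback convention $\varphi^*(de)=\sum d\tilde e^i$ must be weighted by the dilation factors for harmonicity to be preserved. I will first try working on the subdivided model where $\varphi$ maps edges to edges. There I verify the vertex balancing directly from the harmonicity of $\varphi$, namely that $d_{\tilde x,v}$ is independent of $v$. A second, more cosmetic issue is the abuse of notation $H_1^-=H_1/H_1^+$ in the text. Integrally the quotient and the sub-lattice differ by finite index, so I will state explicitly that the sub-lattice interpretation is the one used.
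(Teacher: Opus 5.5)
Your proposal is correct and is essentially the paper's argument: the paper only remarks that the statement follows from Proposition~\ref{prym cyc} together with the description $\py(\widetilde{\Gamma}/\Gamma)=\ker({\varphi^*}^{\vee})/\ker\varphi_*$, and you fill in the omitted details, with your sublattice reading of $H_1(\widetilde{\Gamma},\Z)^-$ and the dilation-weighted pullback being exactly the conventions the statement needs. Note that if you use $\py(\widetilde{\Gamma}/\Gamma)=(\ker\nm_G)^{\circ}$ directly, its lift to $\Omega(\widetilde{\Gamma})^*$ is the kernel of the transpose of $\nm_G$, which is the annihilator of $\im\nm_G=\Omega(\widetilde{\Gamma})^+$ with lattice $H_1(\widetilde{\Gamma},\Z)\cap\ker\nm_G$; so your Step~1 on dilated edges is only needed if you prefer not to rely on that proposition, which the paper states without proof.
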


The following proposition describes the tropical Prym variety in terms of isogenies of real abelian varieties . 

\begin{proposition}\label{prym comp}
Let $\varphi:\widetilde{\Gamma}\to\Gamma$ be an unramfied cyclic cover of degree $n$ of tropical curves. Then $\py(\widetilde{\Gamma}/\Gamma)$ is complement of the abelian subvariety $\mu^*\jac(\Gamma)$ in $\jac(\widetilde{\Gamma})$. In other words we have the following isogeny of real abelian varieties 
\begin{equation}\label{isog prym}
\jac(\widetilde{\Gamma})\sim \py(\widetilde{\Gamma}/\Gamma)\times \mu^*\jac(\Gamma)
\end{equation}
\end{proposition}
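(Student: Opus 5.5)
The plan is to show that $\jac(\widetilde{\Gamma})$ is the sum of the two subtori $P:=\py(\widetilde{\Gamma}/\Gamma)$ and $B:=\mu^*\jac(\Gamma)=\varphi^*(J)$, and that their intersection is finite. Once both are known, the addition map
\[
\alpha: P\times B\to \widetilde{J},\qquad (p,b)\mapsto p+b,
\]
is a surjective homomorphism of real tori with finite kernel $\{(x,-x): x\in P\cap B\}$. Such a map is an isogeny, which is exactly \eqref{isog prym}.

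\textbf{Finite intersection.} By the Lemma preceding Proposition \ref{prym cyc}, $B=\varphi^*(J)=(\widetilde{J}^G)^{\circ}\subseteq \widetilde{J}^G$. The same Lemma gives $P\cap\widetilde{J}^G\subseteq P[n]$. Hence
\[
P\cap B\subseteq P[n],
\]
and $P[n]$ is finite, being the $n$-torsion of a real torus of dimension $\dim P$.

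\textbf{Surjectivity.} I would prove it by a dimension count combined with connectedness, since the image of $\alpha$ is a connected closed subtorus.

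On the $B$ side, $\varphi^*$ has finite kernel because $\nm_\varphi\circ\varphi^*=n\cdot 1_J$, so $\dim B=\dim J=g$.

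On the $P$ side, Proposition \ref{prym cyc} gives $P=(\ker \nm_G)^{\circ}$. Since $\nm_G=\varphi^*\circ\nm_\varphi$ and $\varphi^*$ has finite kernel, $(\ker\nm_G)^{\circ}$ is the identity component of $\ker\nm_\varphi$. I would now pass to the tangent level, i.e. work with $H_1(\widetilde{\Gamma},\R)$. Here $\nm_\varphi$ is induced by $\varphi_*$, and $\varphi_*$ is surjective over $\R$ because $\varphi_*\varphi^*=n$ on $H_1(\Gamma,\R)$. So
\[
\dim P=\widetilde g-g .
\]
Alternatively, this follows from Proposition \ref{prym comp}, $P={\Omega(\widetilde{\Gamma})^-}^*/H_1(\widetilde{\Gamma},\Z)^-$, together with the decomposition $H_1(\widetilde{\Gamma},\R)=H_1^+\oplus H_1^-$, where $H_1^+=H_1^{G}\cong \varphi^*H_1(\Gamma,\R)$ has dimension $g$.

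Thus $\dim P+\dim B=\widetilde g=\dim\widetilde J$. Since $\ker\alpha$ is finite, $\dim\operatorname{im}\alpha=\widetilde g$, and a connected subtorus of full dimension is all of $\widetilde J$.

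For a more explicit surjectivity statement, one can use averaging. For $x\in\widetilde J$, write
\[
nx=\nm_G(x)+\sum_{k}(1-\tau^k)x .
\]
Here $\nm_G(x)=\varphi^*\nm_\varphi(x)\in B$, and each $(1-\tau^k)x=(1-\tau)(1+\cdots+\tau^{k-1})x$ lies in $\im(1-\tau)=P$. Since multiplication by $n$ is surjective on a torus, $\widetilde J=P+B$.

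\textbf{Main obstacle.} The delicate point is the dimension identification of the $G$-invariant part: one must check that $H_1(\widetilde{\Gamma},\R)^G$ is exactly $\varphi^*H_1(\Gamma,\R)$ and has dimension $g$. The averaging argument above sidesteps this, so I would use it as the primary route and keep the dimension count as a consistency check. With this route the remaining inputs are just the cited Lemma, namely $\nm_G=\varphi^*\nm_\varphi$ and $P\cap\widetilde J^G\subseteq P[n]$, plus Proposition \ref{prym cyc}.
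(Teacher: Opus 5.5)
Your proof is correct. It is also more explicit than the paper, which gives no proof of this proposition. There the statement is presented as a consequence of the preceding Lemma, Proposition~\ref{prym cyc} and the eigenspace description $\py(\widetilde{\Gamma}/\Gamma)={\Omega(\widetilde{\Gamma})^-}^*/H_1(\widetilde{\Gamma},\Z)^-$. The intended reasoning is therefore the tangent-level splitting $H_1(\widetilde{\Gamma},\R)=H_1^+\oplus H_1^-$, with $\varphi^*\jac(\Gamma)$ on the invariant part and the Prym on the complementary part; your dimension-count variant is essentially this viewpoint.

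Your primary route is different: it works with endomorphisms of $\widetilde{J}$. The averaging identity $n=\nm_G+\sum_k(1-\tau^k)$ is the classical complementary-subvariety argument for the idempotent $\frac{1}{n}\nm_G$. It yields $P+B=\widetilde{J}$ and $P\cap B\subseteq P[n]$. It even gives the reverse isogeny $x\mapsto\bigl((n-\nm_G)x,\nm_G x\bigr)$, whose composite with the addition map is multiplication by $n$.

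The advantage of your route is economy. You only need three identities:
\begin{itemize}
\item $\nm_G=\varphi^*\circ\nm_\varphi$;
\item $\tau\circ\varphi^*=\varphi^*$, which gives $B\subseteq\widetilde{J}^G$ and then $P\cap B\subseteq P[n]$ in one line;
\item $\nm_\varphi\circ\tau=\nm_\varphi$, which together with connectedness gives $\im(1-\tau)\subseteq P$.
\end{itemize}
So neither the equality in Proposition~\ref{prym cyc} nor $\varphi^*(J)=(\widetilde{J}^G)^{\circ}$ is actually required, and the paper asserts both without proof. You also avoid identifying $H_1(\widetilde{\Gamma},\R)^G$ with $\varphi^*H_1(\Gamma,\R)$. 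And you avoid the paper's definition of $H_1^-$ as a quotient, which must be read as the sublattice $\ker\varphi_*$ for the eigenspace formula to make sense.

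What the lattice route buys, once made precise, is the degree of the isogeny. For a free cover, the kernel of $P\times B\to\widetilde{J}$ is $H_1(\widetilde{\Gamma},\Z)/\bigl(\ker\varphi_*\oplus\varphi^*H_1(\Gamma,\Z)\bigr)$. For $\Z_3$-covers its order is the $3^{g-1}$ that appears as the change-of-basis determinant in the paper's volume computation. Your argument only shows that this kernel is $n$-torsion.
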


Recall from \cite{MZ08} that for every metric graph $\Gamma$ we have the tropical \emph{Abel-Jacobi} map $\alpha:\Gamma\to\jac(\Gamma)$. Using this and our defintion of the Prym variety of a covering  $\varphi:\widetilde{\Gamma}\to\Gamma$ we can now define

\begin{definition}\label{abel-prym}
Let $\varphi:\widetilde{\Gamma}\to\Gamma$ be an unramfied cyclic cover of degree $n$ of metric graphs. Let $P=\py(\widetilde{\Gamma}/\Gamma)$ and let $\tau$ be as in the beginning of this section. The tropical \emph{Abel-Prym map} of $\varphi$ is defined as the composition
\begin{equation}\label{compos AP}
\Psi: \widetilde{\Gamma}\xrightarrow{\alpha}\jac(\widetilde{\Gamma})\xrightarrow{1- \tau} P
\end{equation}
\end{definition}


\subsection{Tropical triple covers.}

Let $\Gamma$ be a graph of genus $g$. Fix a spanning tree $T\subset\Gamma$ and a seubset $S\subseteq\{e_0,\ldots, e_{g-1}\}$ of the edges in the complement $E(\Gamma)\setminus E(T)$. Let $T^1,T^2$ and $T^3$ be three copies of $T$and for a vertex $v\in V(T)=V(\Gamma)$ denote $v^1, v^2, v^3$ be the corresponding vertices in $T^{i}$ for $i=1,2,3.$ We define
\[\widetilde{\Gamma}=T^1\cup T^2\cup T^3\cup\{(e_i)^{1,2,3}\}.\]
The map $p:\widetilde{\Gamma}\to\Gamma$ sends $T^{1,2,3}$ isomorphically to $T$ and $(e_i)^{1,2,3}$ to $e_i$. For each $e_i\in S,$ the three edges $(e_i)^{1,2,3}$ have one vertex on $T^1, T^2, $ and $T^2, T^3$ and $T^1, T^3$. Note that the three graphs above the loop resulting from the simple loop associated to $e_i$ are isomorphic as triple covers. If $e_i\notin S,$ then the two  edges of $(e_i)^{1,2,3}$ are on the same tree $T^1, T^2$ or $T^3$. We make the convention that $e_0^1$ has its source vertex on $T^2$ and target vertex on $T^1$, $e_0^2$ has its source vertex on $T^3$ and target vertex on $T^2$ and $e_0^3$ has its source vertex on $T^1$ and target vertex on $T^3$. A spanning tree for the graph $\widetilde{\Gamma}$ is $\widetilde{T}=T^1\cup T^2\cup T^3\cup\{e_0^1\}\cup\{e_0^2\}$.\\

Let us construct a basis for $H^1(\Gamma, \Z)$ and $H^1(\widetilde{\Gamma}, \Z)$. Let $\gamma_i\in H^1(\Gamma, \Z)$  for $i=0,\ldots, g-1$ denote the unique cycle of $T\cup\{e_i\}$ such that $\langle \gamma_i, e_i\rangle=1$. Similarly, let $\widetilde{\gamma}_0\in H^1(\widetilde{\Gamma}, \Z)$  and 
$\widetilde{\gamma}_i^j\in H^1(\widetilde{\Gamma}, \Z)$ be the unique cycle respectively of $\widetilde{\Gamma}\cup\{e_0^3\}$ and $\widetilde{\Gamma}\cup\{e_i^j\}, j=1,2,3$. The cycle $\widetilde{\gamma}_0$ starts at $v^1$ and then proceeds to $w^3$ via $+e_0^3$ and then to $v^3$ via a unique path on $T^3$ then to $w^2$ via $+e_0^2$ and then $v^2$ via a unique path on $T^2$ and then to $w^1$ via $+e_0^1$ and then again back to $v^1$ via a unique path on $T^1$.  In other words,
\[\widetilde{\gamma}_0=e_0^1+e_0^2+e_0^3+\sE(\widetilde{T}), \tau_*(\widetilde{\gamma}_0)=e_0^1+e_0^2+e_0^3+\sE^{\prime}(\widetilde{T}),
p_*(\widetilde{\gamma}_0)=3e_0+\sE(T)\]
where $\sE(\widetilde{T}), \sE^{\prime}(\widetilde{T})$ and $\sE(T)$ denote a set of edges on $\widetilde{T}$ and $T$ respectively. Now computing these in the bases given above, we get
\[\tau_*(\widetilde{\gamma}_0)=\widetilde{\gamma}_0, p_*(\widetilde{\gamma}_0)=3\gamma_0\]

Let us compute the map $p_*:H^1(\widetilde{\Gamma}, \Z)\to H^1(\Gamma, \Z)$ and the map $\tau_*:H^1(\widetilde{\Gamma}, \Z)\to H^1(\widetilde{\Gamma}, \Z)$ on the basis $\widetilde{B}$.

Consider the cycle $\widetilde{\gamma}_i^1$ for $e_i\in S\setminus\{e_0\}$. Let us define the index 
\[\sigma_i=\begin{cases}
+1 \quad\text{ if } s(e_i^1)=s(e_i)^2 (\text{hence } t(e_i^1)=t(e_i)^1)\\
 -1  \quad\text{ if } s(e_i^1)=s(e_i)^1 (\text{hence } t(e_i^1)=t(e_i)^2)
\end{cases}\]
Note that, by our convention, it automatically implies $s(e_i^2)=s(e_i)^3$ (so $t(e_i^2)=t(e_i)^2$) and $s(e_i^3)=s(e_i)^1$ (so $t(e_i^3)=t(e_i)^3$) in the case $\sigma_i=+1$ and $s(e_i^2)=s(e_i)^2$ (so $t(e_i^2)=t(e_i)^3$) and $s(e_i^3)=s(e_i)^3$ (so $t(e_i^3)=t(e_i)^1$) in the case $\sigma_i=-1$.

If $\sigma_i=+1$, then the cycle $\widetilde{\gamma}_i^1$ starts at $s(e_i^1)=s(e_i)^2$ and then moves to $t(e_i^1)=t(e_i)^1$ on $T_1$ via $e_i^1$. Since this path passes from $T^2$ to $T^1$, it must contain $-e_0^1$. If $\sigma_i=-1$, then the cycle $\widetilde{\gamma}_i^1$ starts at $s(e_i^1)=s(e_i)^1$ and then moves to $t(e_i^1)=t(e_i)^2$ on $T^2$ via $e_i^1$. Since this path passes from $T^1$ to $T^2$, it must contain $-e_0^1$. Similar considerations hold for $\widetilde{\gamma}_i^1$ and $\widetilde{\gamma}_i^2$. Hence we get
\[\widetilde{\gamma}_i^1=e_i^1-\sigma_i e_0^1+E(\widetilde{T}), \tau_*(\widetilde{\gamma}_i^1)=e_i^2-\sigma_i e_0^2+E(\widetilde{T})\] 
\[\widetilde{\gamma}_i^2=e_i^2-\sigma_i e_0^2+E(\widetilde{T}), \tau_*(\widetilde{\gamma}_i^2)=e_i^3-\sigma_i e_0^3+E(\widetilde{T})\]
\[\widetilde{\gamma}_i^3=e_i^3+\sigma_i e_0^1+\sigma_i e_0^2+E(\widetilde{T}), \tau_*(\widetilde{\gamma}_i^1)=e_i^1+\sigma_i e_0^2+\sigma_i e_0^3+E(\widetilde{T})\]
Similarly,
\[p_*(\widetilde{\gamma}_i^1)=\gamma_i-\sigma_i \gamma_0\] 
\[p_*(\widetilde{\gamma}_i^2)=\gamma_i-\sigma_i \gamma_0\] 
\[p_*(\widetilde{\gamma}_i^3)=\gamma_i+2\sigma_i \gamma_0\] 

A basis for $\ker p_*$ is given by 
\[B^{\prime}_1=\{\widetilde{\gamma}_i^2-\widetilde{\gamma}_i^1\}_{e_i\in S\setminus\{e_0\}}\cup\{\widetilde{\gamma}_i^3-\widetilde{\gamma}_i^2-\sigma_i\widetilde{\gamma}_0\}_{e_i\in S\setminus\{e_0\}}\cup \{\widetilde{\gamma}_i^2-\widetilde{\gamma}_i^1\}_{e_i\notin S}\cup \{\widetilde{\gamma}_i^3-\widetilde{\gamma}_i^2\}_{e_i\notin S}\]
For the rest of our computation, we use the basis of $H_1(\Gamma, \Z)$ as given in \cite{LZ22}, namely
\[B^{\prime}=\{\gamma_0\}\cup\{\gamma_i-\sigma_i\gamma_0\}_{e_i\in S\setminus\{e_0\}}\cup\{\gamma_i\}_{e_i\notin S}\]
and therefore
\[B^{\prime}_2=p^*(B^{\prime})=\{\widetilde{\gamma}_0\}\cup\{\widetilde{\gamma}_i^1+\widetilde{\gamma}_i^2+\widetilde{\gamma}_i^3-\sigma_i\widetilde{\gamma}_0\}_{e_i\in S\setminus\{e_0\}}\cup\{\widetilde{\gamma}_i^1+\widetilde{\gamma}_i^2+\widetilde{\gamma}_i^3\}_{e_i\notin S}\]
Note that if $\Sigma=V/\Lambda$ is a $g$-dimensional pptav with $\{\lambda_1,\lambda_2,\ldots, \lambda_g\}$ a bsis for the lattice $\Lambda$, then we define
\[\vol^2(\Sigma)=\det(\lambda_i,\lambda_j)\] 
In particular $\vol^2(\jac(\Gamma))=\gram_{\Gamma}(B^{\prime}), \vol^2(\py(\widetilde{\Gamma}/\Gamma))=\gram_{P}(B^{\prime}_1)$. Here $\gram$ denotes the Gramian matrix, i.e., the matrix $G$ with entries $G_{ij}=(\lambda_i,\lambda_j)$.\\

For the basis $B^{\prime\prime}=B^{\prime}_1\cup B^{\prime}_2$ one sees that the change of basis matrix is block triangular with the top left entry 1 and a block $\begin{pmatrix}
1&1&1\\
-1&1&0\\
0&-1&1
\end{pmatrix}$ for each edge $e_i, i=1,2,\dots, g-1$. The determinant of this matrix is therefore $\pm 3^{g-1}$. It follows therefore that 
\[\vol^2(\jac(\widetilde{\Gamma}))=3^{2-2g}\gram_{\widetilde{\Gamma}}(B^{\prime\prime})\]
Now since for any $\gamma_1,\gamma_2\in H_1(\Gamma, \Z)$, it holds that $(p^*(\gamma_1), p^*(\gamma_2))_{\widetilde{\Gamma}}=3(\gamma_1, \gamma_2)_{\Gamma}$ we have that
\[\gram_{\widetilde{\Gamma}}(B^{\prime}_2)=3^g\gram_{\Gamma}(B^{\prime})\]
As is discussed in \cite{LZ22}, p. 23, for double covers of metric graphs we have $(\widetilde{\gamma}_1,\widetilde{\gamma}_2)_{\widetilde{\Gamma}}=0$ for every $\widetilde{\gamma}^{\prime}_1\in B^{\prime}_1$ and for every $\widetilde{\gamma}^{\prime}_2\in B^{\prime}_2$. This is due to the fact that $\iota_*(\widetilde{\gamma}^{\prime}_1)=\widetilde{\gamma}^{\prime}_1$ for every $\widetilde{\gamma}^{\prime}_1\in B^{\prime}_1$ and $\iota_*(\widetilde{\gamma}^{\prime}_2)=-\widetilde{\gamma}^{\prime}_2$ for every $\widetilde{\gamma}^{\prime}_2\in B^{\prime}_2$. For triple covers, the latter equalities do not hold true, however, a direct calculation shows that also in this case we have $(\widetilde{\gamma}_1,\widetilde{\gamma}_2)_{\widetilde{\Gamma}}=0$ from which it follows that
\[\gram_{\widetilde{\Gamma}}(B^{\prime\prime})=\gram_{\widetilde{\Gamma}}(B^{\prime}_1)\gram_{\widetilde{\Gamma}}(B^{\prime}_2)\]

Since $(\cdotp,\cdotp)_P=\frac{1}{3}(\cdotp,\cdotp)_{\widetilde{\Gamma}}$, it follows that 
\[\gram_{\widetilde{\Gamma}}(B^{\prime}_1)=3^{g-1}\gram_{P}(B^{\prime}_1)\]
Now by the above, we get
\[\frac{\vol^2(\jac(\widetilde{\Gamma}))}{3\vol^2(\jac(\widetilde{\Gamma}))}=\frac{3^{2-2g}\gram_{\widetilde{\Gamma}}(B^{\prime\prime})}{3\gram_{\Gamma}(B^{\prime})}=\frac{3^{1-2g}\gram_{\widetilde{\Gamma}}(B^{\prime}_1)\gram_{\widetilde{\Gamma}}(B^{\prime}_2)}{\gram_{\Gamma}(B^{\prime})}=\gram_{P}(B^{\prime}_1),\]
and this is equal to $\vol^2(\py(\widetilde{\Gamma}/\Gamma))$. We have therefore proved the following
\begin{proposition}\label{volumprym}
Let $\varphi:\widetilde{\Gamma}\to\Gamma$ be a free $\Z_3$-cover of metric graphs. Then we have the relation,
\[\vol^2(\py(\widetilde{\Gamma}/\Gamma))=\frac{\vol^2(\jac(\widetilde{\Gamma}))}{3\vol^2(\jac(\widetilde{\Gamma}))},\]
the volumes here are calculated using the intrinsic principal polarization. 
\end{proposition}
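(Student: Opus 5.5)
Throughout, I read the denominator in the statement as $3\vol^2(\jac(\Gamma))$; the formula only makes sense with the base Jacobian there. The plan is a lattice-index argument comparing Gram determinants of three lattices in $H_1(\widetilde{\Gamma},\Z)$. Put $L=H_1(\widetilde{\Gamma},\Z)$, $L_1=\ker p_*$ (the lattice of $\py(\widetilde{\Gamma}/\Gamma)$, with basis $B'_1$) and $L_2=p^*H_1(\Gamma,\Z)$ (with basis $B'_2$). The proof has three steps.

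\emph{Step 1: orthogonality.} I first show $L_1\perp L_2$ for $(\cdot,\cdot)_{\widetilde{\Gamma}}$. I would not check this on bases. Instead I use the projection formula $(x,p^*y)_{\widetilde{\Gamma}}=(p_*x,y)_\Gamma$. It holds edge by edge: $p^*e=e^1+e^2+e^3$, each $e^j$ has length $\ell(e)$, and the cover is free. Hence $(x,p^*y)_{\widetilde{\Gamma}}=0$ whenever $p_*x=0$. Taking $x=p^*y'$ in the same formula, together with $p_*p^*=3$, gives $\gram_{\widetilde{\Gamma}}(B'_2)=3^g\gram_\Gamma(B')$.

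\emph{Step 2: the index of $L_1\oplus L_2$ in $L$.} Orthogonality gives
\[\gram_{\widetilde{\Gamma}}(B'_1)\,\gram_{\widetilde{\Gamma}}(B'_2)=[L:L_1\oplus L_2]^2\,\vol^2(\jac(\widetilde{\Gamma})).\]
To compute the index I use
\[L/(L_1\oplus L_2)\;\cong\; p_*L/p_*L_2=p_*L/3H_1(\Gamma,\Z).\]
For a connected free $\Z_3$-cover, $p_*L$ is the kernel of the monodromy $H_1(\Gamma,\Z)\to\Z_3$, which has index $3$. So the index is $3^{g-1}$. This matches the determinant $\pm3^{g-1}$ of the block-triangular change of basis written above, which serves as a consistency check.

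\emph{Step 3: comparison with the Prym polarization.} Here I relate $\gram_{\widetilde{\Gamma}}(B'_1)$ to $\gram_P(B'_1)$ using the relation $(\cdot,\cdot)_P=\tfrac13(\cdot,\cdot)_{\widetilde{\Gamma}}$ from Theorem 2.2.7 of \cite{LU21}. Combining the three displays then expresses $\vol^2(\py)$ through $\vol^2(\jac(\widetilde{\Gamma}))$ and $\vol^2(\jac(\Gamma))$.

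This last step is the main obstacle. The power of $3$ picked up from rescaling is $3^{\operatorname{rk}L_1}$, and $\operatorname{rk}L_1=\widetilde g-g=2g-2$. Together with $3^{2(g-1)}$ from the index and $3^g$ from $L_2$, a naive count gives
\[\vol^2(\py)=\frac{\vol^2(\jac(\widetilde{\Gamma}))}{3^{g}\vol^2(\jac(\Gamma))}\]
rather than a single factor $3$. For double covers the same count gives exactly the factor $2$ of \cite{LZ22}.

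To obtain the claimed factor $3$, the restricted form $\tfrac13(\cdot,\cdot)_{\widetilde{\Gamma}}|_{L_1}$ must therefore differ from the intrinsic principal polarization by a further isogeny of degree $3^{g-1}$. In that case $\gram_{\widetilde{\Gamma}}(B'_1)=3^{g-1}\gram_P(B'_1)$, as in the computation preceding the statement. So first I would determine the intrinsic principal polarization on $P$ by identifying $L_1^\vee$ inside $\Omega(\widetilde{\Gamma})^-$ via Proposition \ref{prym comp}. Then I would compute the discriminant of $\tfrac13(\cdot,\cdot)_{\widetilde{\Gamma}}$ on $L_1$ relative to it, which should be $3^{g-1}$. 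I expect this exponent bookkeeping to be where the argument either closes or forces a correction to the statement.
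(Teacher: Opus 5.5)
Your reading of the denominator as $3\vol^2(\jac(\Gamma))$ is the intended one. Steps 1 and 2 are correct and follow the paper's skeleton: orthogonality of $L_1=\ker p_*$ and $L_2=p^*H_1(\Gamma,\Z)$, $\gram_{\widetilde\Gamma}(B'_2)=3^g\gram_\Gamma(B')$, and index $3^{g-1}$. The paper gets the index from the block change-of-basis matrix and asserts orthogonality by ``direct calculation''; your projection-formula and monodromy arguments are cleaner. Your exponent count in Step 3 is also right, and it finds the weak point in the paper's own proof. The paper deduces $\gram_{\widetilde\Gamma}(B'_1)=3^{g-1}\gram_P(B'_1)$ from $(\cdot,\cdot)_P=\frac13(\cdot,\cdot)_{\widetilde\Gamma}$, but on a lattice of rank $2g-2$ that relation gives the factor $3^{2g-2}$.

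As written, however, your proposal does not prove the statement. The decisive step is left open, and the target you set for it is off (and inconsistent with your preceding sentence). If $\frac13(\cdot,\cdot)_{\widetilde\Gamma}|_{L_1}$ had discriminant $3^{g-1}$ relative to the intrinsic structure, you would get $\gram_{\widetilde\Gamma}(B'_1)=3^{3g-3}\gram_P(B'_1)$. What you need is that the \emph{undivided} restriction $(\cdot,\cdot)_{\widetilde\Gamma}|_{L_1}$ have relative discriminant $3^{g-1}$.

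The missing idea is already contained in your Step 2. Describe $P$ with its integral structure. Its lattice is $L_1$. Since $P$ is the subtorus of $\Omega(\widetilde\Gamma)^*$ annihilating $p^*\Omega(\Gamma)$, its dual integral lattice is $\Lambda_P=\Omega(\widetilde\Gamma,\Z)/p^*\Omega(\Gamma,\Z)$. This is torsion-free because $p^*\Omega(\Gamma,\Z)$ is saturated for a free cover. The principal polarization of $\jac(\widetilde\Gamma)$ identifies $H_1(\widetilde\Gamma,\Z)$ with $\Omega(\widetilde\Gamma,\Z)$, and under this identification $p^*\Omega(\Gamma,\Z)$ is exactly $L_2$. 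So the polarization induced on $P$ is the natural map $L_1\to L/L_2$, whose cokernel is
\[L/(L_1\oplus L_2)\cong p_*L/3H_1(\Gamma,\Z)\cong(\Z/3)^{g-1}.\]
Hence $\gram_{\widetilde\Gamma}(B'_1)=3^{g-1}\,|\det[\lambda'_i,\mu_j]|$ for bases $\{\lambda'_i\}$ of $L_1$ and $\{\mu_j\}$ of $\Lambda_P$. Steps 1--2 then give $|\det[\lambda'_i,\mu_j]|=\vol^2(\jac(\widetilde\Gamma))/(3\vol^2(\jac(\Gamma)))$.

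The same computation shows the induced polarization has type $(1^{g-1},3^{g-1})$. So for $g\ge 2$ it is not $3$ times a principal polarization, and $\frac13(\cdot,\cdot)_{\widetilde\Gamma}$ is not even integral on $L_1$. In the paper's example with unit lengths, $588/(9\cdot 4)\notin\Z$, while $588/12=49$. The formula therefore holds when $\vol^2(P)$ is taken to be this polarization-free covolume, which agrees with the Gram determinant for principally polarized tori such as the Jacobians. You should drop the relation $(\cdot,\cdot)_P=\frac13(\cdot,\cdot)_{\widetilde\Gamma}$ rather than try to reconcile it with the factor $3$.
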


\begin{example}\label{triple exam}
Let us consider a free triple cover of a genus 2 graph as follows

\

\begin{tikzpicture}
  [scale=.8,auto=left,every node/.style={circle,draw,fill=white}]

 \node (n9) at (2,13) {$v^{13}$};
  \node (n10) at (5,13)  {$v^{23}$};
  \node (n11) at (9,13)  {$v^{33}$};
 \node (n12) at (12,13) {$v^{43}$};

  \node (n6) at (2,9) {$v^{12}$};
  \node (n4) at (5,9)  {$v^{22}$};
  \node (n5) at (9,9)  {$v^{32}$};
 \node (n7) at (12,9) {$v^{42}$};

 \node (n1) at (12,5) {$v^{41}$};
  \node (n2) at (9,5)  {$v^{31}$};
  \node (n3) at (5,5)  {$v^{21}$};
 \node (n8) at (2,5)  {$v^{11}$};

  \foreach \from/\to in {n4/n5,n1/n2,n2/n3, n8/n3,n4/n6,n5/n7,n9/n10,n10/n11,n11/n12,n6/n3,n9/n4,n8/n10,n5/n1,n11/n7,n2/n12}
\draw (\from) -- (\to);

\end{tikzpicture}

\begin{tikzpicture} [scale=0.75]  

      \begin{scope}[>=latex,
                  		every node/.style={midway},
                       every edge/.style={draw=black,thick}]

      \begin{scope}[every node/.style={circle,draw,fill=white}]
         \node (1) at (0.5,4.5) {$v_1$};
         \node (2) at (4,4.5) {$v_2$};
         \node (4) at (7.5,4.5) {$v_3$};
         \node (5) at (11,4.5) {$v_4$};
      \end{scope}      

          \path [-] (2) edge[right] node [below]{\footnotesize $e_3$} (1);
          \path [-] (2) edge[right] node [below] {\footnotesize $e_4$} (4);
          \path [-] (5) edge[right] node [below]{\footnotesize $e_5$} (4);
\foreach \from/\to in {1/2}
\path (\from) edge [bend left] node [above] {\footnotesize $e_1$}(\to);
\foreach \from/\to in {4/5}
\path (\from) edge [bend left] node [above]  {\footnotesize $e_2$}(\to);
 \end{scope}
 \end{tikzpicture}
\end{example}
If we consider a model of the graphs where the length of each edge is equal to 1, then 
\[\vol^2(\jac(\widetilde{\Gamma}))=|\jac(\widetilde{\Gamma})|, \vol^2(\jac(\Gamma))=|\jac(\Gamma)|,\]
Further, we calculate that $|\jac(\widetilde{\Gamma})|=588$ and $|\jac(\Gamma)|=4$. Therefore we get
\[\vol^2(\py(\widetilde{\Gamma}/\Gamma))=\frac{\vol^2(\jac(\widetilde{\Gamma}))}{3\vol^2(\jac(\widetilde{\Gamma}))}=\frac{|\jac(\widetilde{\Gamma})|}{3|\jac(\Gamma)|}=49\]
In the next section we will calculate this volume using the Ihara Zeta function. \\

As the last result of this section, it's worth mentioning that for unramified tropical triple covers, Lemma \ref{dilcyc1} can be stated as follows:
\begin{lemma}\label{dilcyc trip}
Let $\varphi:\widetilde{\Gamma}\to\Gamma$ be an unramified $\Z_3$-cover of metric graphs with $\Gamma$ unaugmented, then
\begin{enumerate}
\item If $x\notin \gamma(\varphi),$ then $\varphi^{-1}(x)$ consists of at most $3$ points of genus 0.
\item If $x\in \gamma(\varphi),$ then $\varphi^{-1}(x)$ consists of points of genus\par
 $ \frac{1}{2}\deg_{\gamma_2(\varphi)}+\deg_{\gamma_3(\varphi)}-(d_{\widetilde{x}}-1)$,
\end{enumerate}
where $\deg_{\gamma_u(\varphi)}$ is defined in \ref{dilcyc} that.
\end{lemma}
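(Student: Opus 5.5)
The plan is to specialize the local ramification identity used in the proof of Lemma \ref{dilcyc1} to degree $3$. Since $\Gamma$ is unaugmented, the condition $R_{\widetilde{x}}=0$ becomes
\[2g(\widetilde{x})+2(d_{\widetilde{x}}-1)=\sum_{\widetilde{v}\in T_{\widetilde{x}}(\widetilde{\Gamma})}(d_{\widetilde{v}}(\varphi)-1),\]
exactly as in the proof of Lemma \ref{dilcyc1}, with $s=d_{\widetilde{x}}$. For a $\Z_3$-cover the dilation factor of a tangent direction is at most the total degree, so $d_{\widetilde{v}}(\varphi)\in\{1,2,3\}$. The summands on the right are therefore $0$, $1$ or $2$.

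For part (1), suppose $x\notin\gamma(\varphi)$. Then every $d_{\widetilde{v}}=1$, so the right-hand side vanishes. This gives $g(\widetilde{x})+(d_{\widetilde{x}}-1)=0$, and since both terms are nonnegative, $g(\widetilde{x})=0$ and $d_{\widetilde{x}}=1$. The fiber has at most $\deg\varphi=3$ points, because $\sum_{\widetilde{x}\mapsto x}d_{\widetilde{x}}=3$.

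For part (2), I will group the tangent vectors at $\widetilde{x}$ by their dilation factor. Let $a_u$ denote the number of $\widetilde{v}\in T_{\widetilde{x}}(\widetilde{\Gamma})$ with $d_{\widetilde{v}}=u$. The right-hand side then equals $a_2+2a_3$. Dividing by $2$ gives
\[g(\widetilde{x})=\tfrac12 a_2+a_3-(d_{\widetilde{x}}-1).\]

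The remaining step, which I expect to be the main obstacle, is to identify $a_2$ and $a_3$ with $\deg_{\gamma_2(\varphi)}$ and $\deg_{\gamma_3(\varphi)}$. Two facts are needed.
\begin{itemize}
\item Dilation factors are locally constant along edges, so a tangent direction at $\widetilde{x}$ with dilation $u$ maps to a tangent direction at $x$ lying in $\gamma_u(\varphi)$.
\item Harmonicity at $\widetilde{x}$ controls how many directions at $\widetilde{x}$ lie over each $v\in T_x(\Gamma)$, since $d_{\widetilde{x},v}=d_{\widetilde{x}}\le 3$ for every $v$.
\end{itemize}
Over a direction $v$ in $\gamma_3$, a dilation-$3$ direction exhausts the whole degree. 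Hence $d_{\widetilde{x}}=3$, exactly one $\widetilde{v}$ lies over $v$, and $\widetilde{x}$ is the unique point of the fiber.

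When $d_{\widetilde{x}}=2$, a dilation-$2$ direction is again the unique one over $v$. When $d_{\widetilde{x}}=3$, each $\gamma_2$-direction $v$ carries one dilation-$2$ and one dilation-$1$ preimage.

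Using the transitivity of the $\Z_3$-action on fibers, I would check that the count of dilation-$u$ directions at $\widetilde{x}$ equals the number of tangent directions at $x$ lying in $\gamma_u(\varphi)$, with the interpretation of $\deg_{\gamma_u(\varphi)}$ as in Definition \ref{dilcyc}. Substituting this into the displayed formula yields the stated genus. If the identification holds only as an inequality in some mixed configuration, I would record the statement as a lower bound, in line with Lemma \ref{dilcyc1}(2).
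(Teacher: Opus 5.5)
Your route is the paper's own. The lemma is stated there without separate proof, as the $n=3$ case of the identity $2g(\widetilde{x})+2(d_{\widetilde{x}}-1)=\sum_{\widetilde{v}}(d_{\widetilde{v}}(\varphi)-1)$ from the proof of Lemma \ref{dilcyc1}. Your part (1) and your formula $g(\widetilde{x})=\tfrac12 a_2+a_3-(d_{\widetilde{x}}-1)$ are fine.

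The gap is the step you flag yourself. You never prove $a_u=\deg_{\gamma_u(\varphi)}$ for every point of the fiber, and the configurations you prepare for would break it. Suppose a fiber could be $\{\widetilde{x},\widetilde{x}'\}$ with $d_{\widetilde{x}}=2$ and $d_{\widetilde{x}'}=1$, as in your ``$d_{\widetilde{x}}=2$'' case. At $\widetilde{x}$ the identity gives $a_2=2g(\widetilde{x})+2\ge 2$, so $\deg_{\gamma_2(\varphi)}(x)\ge 2$. At $\widetilde{x}'$ one has $a_2=a_3=0$ and $g(\widetilde{x}')=0$, while the stated formula gives a value $\ge 1$. The statement would fail there, and not even as a lower bound, so your fallback of ``recording a lower bound'' does not rescue it.

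The missing idea is to use the Galois hypothesis before counting.
\begin{enumerate}
\item Each fiber $\varphi^{-1}(x)$ is a single $\Z_3$-orbit, so it has $3/|\mathrm{Stab}(\widetilde{x})|\in\{1,3\}$ points.
\item Since $\varphi$ is $\Z_3$-invariant and $\Z_3$ acts by isometries, group elements preserve local degrees and dilation factors. Hence $d_{\widetilde{x}}$ is constant on each fiber, and $d_{\widetilde{x}}\in\{1,3\}$.
\item Applied at interior points of edges, this shows every edge of $\Gamma$ has either three preimages of dilation $1$ or one preimage of dilation $3$. So $\gamma_2(\varphi)=\emptyset$, and your $d_{\widetilde{x}}=2$ and mixed cases never occur.
\item If $x\in\gamma(\varphi)$, some preimage direction has dilation $3$. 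Then the fiber is the single point $\widetilde{x}$ with $d_{\widetilde{x}}=3$, and all preimage directions over every $v\in T_x(\Gamma)$ emanate from $\widetilde{x}$.
\item The condition $d_{\widetilde{x},v}=3$ forces exactly one dilation-$3$ direction over each $v$ lying in $\gamma_3(\varphi)$, and none over the other directions.
\end{enumerate}
This gives $a_2=0=\deg_{\gamma_2(\varphi)}$ and $a_3=\deg_{\gamma_3(\varphi)}$. Your formula then yields $g(\widetilde{x})=\deg_{\gamma_3(\varphi)}-2$, which is the statement as an equality.
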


\begin{remark}
Note that it follows directly from Lemma \ref{dilcyc trip} that $\deg_{\gamma_2(\varphi)}$ is even for every $\widetilde{x}$ and hence $\gamma_2(\varphi)$ is a cycle. 
\end{remark}

\subsection{ The Ihara zeta function and the Artin-Ihara $L$-function}

The Dedekind zeta function of a number field is defined for complex numbers $s$ with real part $Re(s) > 1$ by the Dirichlet series. The gtraph-theoretic counterpart for graphs $\Gamma$ using an Euler product over equivalence classes of  closed paths on $\Gamma$. Here we recall some elementary defintions and properties that we will use in this section. \\

Let $\Gamma$ be a graph with $n$ vertices and $m$ edges and genus $g$. A path $P$ of length $k$ is a sequence $P=e_1\cdots e_k$ of oriented edges of $\Gamma$ such that $t(e_i)=s(e_{i+1})$ for $i=1,\ldots, k-1$. The path $P$ is called \emph{closed} if $t(e_k)=s(e_{1})$ and \emph{reduced} if $e_{i+1}\neq\overline{e}_i$  for $i=1,\ldots, k-1$ and $e_{1}\neq\overline{e}_k$. The inetegr powers of closed paths are defined by concatenation, and a closed reduces path $P$ is called \emph{primitive} if it cannot be written in the form $P=Q^k$ for some closed path $Q$ and $k\geq 2$. Two reduced paths are considered equivalent if they differ by a choice of starting point; that is, we set $e_1\cdots e_k\equiv e_j\cdots e_k e_1\cdots e_k{j-1}$ for all $j=1,\ldots, k$. A \emph{prime} $P$ of $\Gamma$ is an equivalent class of primitve paths and has a well-defined length $\ell(P)$. The Ihara zeta function $\zeta(s,\Gamma)$ of a graph $\Gamma$ is the product 
\[\zeta(s,\Gamma)=\displaystyle \prod_p (1-s^{\ell(P)})^{-1}\]
We have the following three-term formula due to Bass 
\[\zeta(s,\Gamma)^{-1}= (1-s^2)^{g-1}\det(I_n-As+(Q-I_n)s^2)\]
This function vanishes at $s=1$ to order $g$ and it follows from the main result of \cite{north} that the leading Taylor coefficient computes the complexity; that is, the order of the Jacobian of $\Gamma$:
\[\zeta(s,\Gamma)^{-1}=(-1)^{g-1}2^g (g-1)|\jac(\Gamma)|(s-1)^g+O((s-1)^{g+1})\] 
Now let $\varphi:\widetilde{\Gamma}\to\Gamma$ be a free Galois covers of graphs with Galois group $G$. Let $\rho$ be a representation of $G$. Choose a prime $P$ of $\Gamma$ and let $v$ be its starting point and choose a vertex $\widetilde{v}\in\widetilde{\Gamma}$ lyign above $v$. The path $P$ lifts to a unique path $\widetilde{P}$ in $\widetilde{\Gamma}$ starting at $\widetilde{v}$ . The path $\widetilde{P}$ need not be cosed in general but its terminal vertex lies also over $v$ and thus there is a unique element $F(P, \widetilde{\Gamma}/\Gamma)\in G$ that maps $\widetilde{v}$ to the terminal vertex of $\widetilde{P}$. This element is called the \emph{Frobenius element} related to the path  and the \emph{Artin-Ihara $L$-function} is defined as
\[L(s,\rho, \widetilde{\Gamma}/\Gamma)=\displaystyle \prod_p \det(1-\rho(F(P, \widetilde{\Gamma}/\Gamma))s^{\ell(P)})^{-1}\]
Furthermore we have the \emph{Artinised valency} and \emph{Artinised adjacency} matrices as
\[Q_{\rho}=Q\otimes I_d, (A_{\rho})uv=\displaystyle\sum\rho(F(e)),\]
where the sum is taken over all edges between $u$ and $v$. We also have the following three-term determinant formula for the $L$-function
\[L(s,\rho, \widetilde{\Gamma}/\Gamma)^{-1}=(1-s^2)^{(g-1)d}\det(I_{nd}-A_{\rho} s+(Q_{\rho}-I_{nd})s^2\]
The zeta functions of $\widetilde{\Gamma}$ and $\Gamma$ are equal to the $L$-function evaluated respectively at the right regular and trivial representations $\rho_G$ and $1_G$:
\[\zeta(s,\widetilde{\Gamma})=L(s,\rho_G, \widetilde{\Gamma}/\Gamma), \zeta(s, \Gamma)=L(s,1_G, \widetilde{\Gamma}/\Gamma)\]
For a decomposable representation $\rho=\rho_1\oplus\rho_2$ we have
\[L(s,\rho, \widetilde{\Gamma}/\Gamma)=L(s,\rho_1, \widetilde{\Gamma}/\Gamma)L(s,\rho_2, \widetilde{\Gamma}/\Gamma).\]
Finally the relation between zeta functions of $\widetilde{\Gamma}$ and $\Gamma$ is given by
\[\zeta(s,\widetilde{\Gamma})=\zeta(s,\Gamma)\displaystyle \prod_p L(s,\rho, \widetilde{\Gamma}/\Gamma)^{d(\rho)},\]
where $d(\rho)$ denotes the degree of the irreducible representation $\rho$ and the product runs over all distinct  irreducible representations of $G$. 
In particular if $G=\Z_3$, let $\rho_1, \rho_2$ be the two 1-dimensional irreducible representations of $G$. We have
\[\zeta(s,\widetilde{\Gamma})^{-1}=\zeta(s, \Gamma)^{-1}L(s,\rho_1, \widetilde{\Gamma}/\Gamma)^{-1}L(s,\rho_2, \widetilde{\Gamma}/\Gamma)^{-1}\]
By the above we also have
\[\zeta(s,\widetilde{\Gamma})^{-1}=(-1)^{3g-3}2^{3g-2} (3g-3)|\jac(\widetilde{\Gamma})|(s-1)^{3g-2}+O((s-1)^{3g-1}),\]
\[\zeta(s, \Gamma)^{-1}=(-1)^{g-1}2^{g} (g-1)|\jac(\Gamma)|(s-1)^{g}+O((s-1)^{g+1}).\]
We also have
\[L(s,\rho_i, \widetilde{\Gamma}/\Gamma)^{-1}=(-1)^{g-1}2^{g-1}\det(Q_{\rho_i}-A_{\rho_i}) (s-1)^{g-1}+O((s-1)^{g}) \text{ for } i=1,2\]
Comparing the expansions of  $\frac{\zeta(s,\widetilde{\Gamma})^{-1}}{\zeta(s,\widetilde{\Gamma})^{-1}}$ and $L(s,\rho_i, \widetilde{\Gamma}/\Gamma)$ we see that
\[\frac{|\jac(\widetilde{\Gamma})|}{3|\jac(\Gamma)|}=\frac{1}{9}\det(Q_{\rho_1}-A_{\rho_1})\det(Q_{\rho_2}-A_{\rho_2})\]
Let us revisit the tripe cover in Example \ref{triple exam}. In this case, since the representations $\rho_1, \rho_2$ are 1-dimensional, $Q_{\rho_i}=Q$ for $i=1,2$ and we have
\[Q_{\rho_1}-A_{\rho_1}=\begin{pmatrix}
2&\xi&0&0\\
\xi^2&3&-1&0\\
0&-1&3&\xi\\
0&0&\xi^2&2
\end{pmatrix}\]
and
\[Q_{\rho_2}-A_{\rho_2}=\begin{pmatrix}
2&\xi^2&0&0\\
\xi&3&-1&0\\
0&-1&3&\xi^2\\
0&0&\xi&2
\end{pmatrix},\]
where $\xi$ denotes a primitive 3rd root of unity. We  compute $\det(Q_{\rho_i}-A_{\rho_i})=21$ and therefore
\[\frac{|\jac(\widetilde{\Gamma})|}{3|\jac(\Gamma)|}=\frac{1}{9}\det(Q_{\rho_1}-A_{\rho_1})\det(Q_{\rho_2}-A_{\rho_2})=49.\]

\end{document}